\newtheorem*{theoremaux}{Theorem \theoremauxnum}
\gdef\theoremauxnum{1}
\newtheorem{lemma}{\bf Lemma}[section]
\newtheorem{theorem}{\bf Theorem}[section]
\newtheorem{proposition}[lemma]{\bf Proposition}
\newtheorem{corollary}[lemma]{\bf Corollary}
\newtheorem{definition}{\bf Definition}[section]
\journal{~}
\begin{document}

\begin{frontmatter}



\title{On Independence Number of Comaximal Subgroup Graph}



\author{Angsuman Das\corref{cor1}}
\ead{angsuman.maths@presiuniv.ac.in}

\author{Arnab Mandal}
\ead{arnab.maths@presiuniv.ac.in}
\address{Department of Mathematics, Presidency University, Kolkata, India}

\cortext[cor1]{Corresponding author}

\begin{abstract}
In this paper, we establish sharp thresholds on the independence number of the comaximal subgroup graph $\Gamma(G)$ that guarantee solvability, supersolvability, and nilpotency of the underlying group $G$. Specifically: \begin{itemize}
    \item For solvability, we prove that any group $G$ with independence number $\alpha(\Gamma(G))\leq 51$ must be solvable, and show that the alternating group $A_5$ is uniquely determined by its graph.
    \item For supersolvability, we show that $\alpha(\Gamma(G))\leq 14$ implies $G$ is supersolvable, except for three explicit exceptions.
    \item For nilpotency, we prove that $\alpha(\Gamma(G))\leq 6$ ensures nilpotency, except for five groups.
\end{itemize}
Finally, we conclude with some open issues involving domination parameters.
\end{abstract}

\begin{keyword}
solvable groups \sep nilpotent groups \sep maximal subgroups
\MSC[2020] 05C25, 20D10, 20D15 

\end{keyword}

\end{frontmatter}

\section{Introduction}
The study of graphs associated with algebraic structures provides a powerful framework for investigating the properties of these structures through graph-theoretic methods (See \cite{Cameron-survey} for an extensive survey). A notable example is the comaximal subgroup graph $\Gamma(G)$ of a finite group $G$, introduced in \cite{akbari}. Subsequent research has explored fundamental properties of these graphs, including connectedness, chromatic number, and perfectness (see \cite{1st-paper, 2nd-paper}). Additionally, the comaximal subgroup graph has been examined for specific families of groups, such as dihedral, cyclic groups, as detailed in \cite{dn, zn, deleted-comaximal}.

In this work, we investigate the independence number of the comaximal subgroup graph. Specifically, we establish that imposing upper bounds on the independence number yields structural restrictions on the underlying group, particularly concerning its solvability.

We begin by recalling the definition of the comaximal subgroup graph and some results essential for our analysis.

\begin{definition}\cite{akbari} Let $G$ be a group and $S$ be the collection of all non-trivial proper subgroups of $G$. The co-maximal subgroup graph $\Gamma(G)$ of a group $G$ is defined to be a graph with $S$ as the set of vertices and two distinct vertices $H$ and $K$ are adjacent if and only if $HK=G$. The deleted co-maximal subgroup graph of $G$, denoted by $\Gamma^*(G)$, is defined as the graph obtained by removing the isolated vertices from $\Gamma(G)$.
\end{definition}

\begin{proposition}\label{min-no-subgp-p-groups}
	(Theorem A \& B, \cite{min-no-of-sg}) If $G$ is a non-cyclic group of order $p^k$ with $k\geq 2$, then its number of subgroups satisfy:
	$$Sub(G)\geq \left\lbrace \begin{array}{ll}
	6, & \mbox{ if }k=3 \mbox{ and }p=2 \\
	(k-1)p+(k+1), & \mbox{ otherwise.}
	\end{array} \right.$$ 
\end{proposition}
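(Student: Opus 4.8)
Since Proposition~\ref{min-no-subgp-p-groups} records a purely group-theoretic fact about the number $Sub(G)$ of subgroups of a non-cyclic $p$-group, my plan is to bound the subgroup lattice one order at a time. Write $Sub(G)=\sum_{i=0}^{k}n_i$, where $n_i$ is the number of subgroups of order $p^i$; then $n_0=n_k=1$, and it is classical that in a finite $p$-group the number of subgroups of any fixed order is $\equiv 1\pmod p$ (a standard argument: the conjugation action of $G$ on that set has $p$-power orbits, which reduces the congruence to the normal subgroups of that order, and that case is handled inductively). Thus the whole statement reduces to showing that each of the $k-1$ intermediate counts $n_1,\dots,n_{k-1}$ is at least $1+p$, with a single genuine exception to be pinned down.

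For $p$ odd I would invoke the classical theorem of Kulakoff: if $G$ is a non-cyclic $p$-group of order $p^k$ with $p>2$, then $n_i\equiv 1+p\pmod{p^2}$, hence $n_i\ge 1+p$, for every $0<i<k$. Summing the layers gives
\[
Sub(G)=n_0+n_k+\sum_{i=1}^{k-1}n_i\ge 2+(k-1)(1+p)=(k-1)p+(k+1),
\]
which is exactly the asserted bound (and it is attained by $\mathbb{Z}_p\times\mathbb{Z}_{p^{k-1}}$, so sharp). The base case $k=2$ is immediate, the only non-cyclic group of that order being $\mathbb{Z}_p^2$ with $p+3=(2-1)p+(2+1)$ subgroups.

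For $p=2$ I would split into two cases. If $G$ is \emph{not} generalized quaternion, then $G$ has more than one involution, so $n_1\ge 3$, and the $2$-group analogue of Kulakoff gives $n_i\ge 3$ for all $0<i<k$ (for the maximal-class groups $D_{2^k}$ and $SD_{2^k}$ this is transparent from their explicit subgroup structure, and otherwise $n_i\equiv 3\pmod 4$), so $Sub(G)\ge 2+3(k-1)=(k-1)\cdot 2+(k+1)$. If $G=Q_{2^k}$, then $G$ has a \emph{unique} involution, so $n_1=1$; but every subgroup of order $4$ is cyclic, counting elements of order $4$ gives $n_2=1+2^{k-2}$, and $n_i\ge 3$ still holds for $3\le i\le k-1$, so
\[
Sub(Q_{2^k})\ge 1+1+(1+2^{k-2})+3(k-3)+1=2^{k-2}+3k-5.
\]
For $k\ge 4$ this is $\ge 3k-1=(k-1)\cdot 2+(k+1)$ since $2^{k-2}\ge 4$, while for $k=3$ it gives $Sub(Q_8)=6$, which is exactly the stated exceptional value ($6<8$); this is precisely why the lone exception sits at $p=2,\ k=3$.

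The only substantive ingredient is the Kulakoff-type bound $n_i\ge 1+p$ (respectively $\ge 3$) on \emph{all} intermediate layers: it is what prevents the subgroup lattice from being too sparse, and it is also where the generalized-quaternion phenomenon forces the single exception, so I expect that to be the main obstacle, everything afterwards being a one-line summation. If one prefers not to quote Kulakoff, an alternative is induction on $k$: unless $G\cong Q_8$, a non-cyclic $p$-group of order $p^k\ge p^3$ has a non-cyclic maximal subgroup $M$ (otherwise all maximal subgroups of $G$ are cyclic, so $G$ has a cyclic subgroup of index $p$, and from the classification of such groups the only non-cyclic example of order $\ge p^3$ is $Q_8$); then $Sub(G)\ge Sub(M)+(\text{number of maximal subgroups})\ge Sub(M)+(p+1)$, and applying the induction hypothesis to $M$ finishes it, with the finitely many groups of order $16$ whose only non-cyclic maximal subgroup is $Q_8$ (essentially $Q_{16}$, where $Sub(Q_{16})=11$) checked by hand. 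In that route the delicate point is establishing this maximal-subgroup dichotomy and clearing the small exceptional cases.
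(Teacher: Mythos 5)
The paper itself contains no proof of this proposition: it is quoted directly as Theorems A and B of the cited reference \cite{min-no-of-sg}, so there is no internal argument to compare yours against. Your blind reconstruction is essentially correct and follows the standard route to this bound: writing $Sub(G)=\sum_i n_i$ over the layers of the subgroup lattice, using Kulakoff's theorem for odd $p$ to get $n_i\geq 1+p$ on every intermediate layer (summing to exactly $(k-1)p+(k+1)$, sharp for $\mathbb{Z}_p\times\mathbb{Z}_{p^{k-1}}$), and for $p=2$ invoking the analogous congruence $n_i\equiv 3\pmod 4$ for $2$-groups that are neither cyclic nor of maximal class, with the maximal-class groups $D_{2^k}$, $SD_{2^k}$, $Q_{2^k}$ handled from their explicit subgroup structure; the unique involution in $Q_{2^k}$ is indeed what produces the single exception $Sub(Q_8)=6$, and your estimate $Sub(Q_{2^k})\geq 2^{k-2}+3k-5\geq 3k-1$ for $k\geq 4$ is arithmetically sound. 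The alternative induction via a non-cyclic maximal subgroup also works, and you correctly isolate its two delicate points: that $Q_8$ is the only non-cyclic $p$-group of order at least $p^3$ all of whose maximal subgroups are cyclic (via the classification of $p$-groups with a cyclic subgroup of index $p$), and the order-$16$ case $Q_{16}$, where $Sub(Q_{16})=11$ meets the bound with equality. The only caveat is that all the real work is delegated to Kulakoff-type enumeration theorems (and, in the second route, to the cyclic-maximal-subgroup classification); these are genuine classical results, but they should be cited precisely (e.g.\ Huppert, \emph{Endliche Gruppen I}, Kap.~III, \S 8, or Berkovich, \emph{Groups of Prime Power Order}, Vol.~1) rather than invoked as ``the $2$-group analogue''; with those references supplied, your proof is complete and is in the same spirit as the argument in the source the paper cites.
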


\begin{proposition}\label{prod-subgp-and-conjugate}
	Let $G$ be a group and $H$ be a proper subgroup of $G$. Then $H\overline{H}\neq G$ for any conjugate $\overline{H}$ of $H$.
\end{proposition}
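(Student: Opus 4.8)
The plan is to convert the product $H\overline{H}$, which need not be a subgroup, into a statement about a single double coset. Write $\overline{H}=gHg^{-1}$ for some $g\in G$. Then
$$H\overline{H}=H\,(gHg^{-1})=(HgH)\,g^{-1},$$
so $H\overline{H}=G$ holds if and only if the $(H,H)$-double coset $HgH$ equals $G$. Thus it suffices to show that $HgH\neq G$ for every $g$.

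First I would dispose of the case $g\in N_G(H)$ — in particular the case $g\in H$ — where $\overline{H}=H$; then $H\overline{H}=H\neq G$ since $H$ is proper. So from now on I may assume $g\notin H$.

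With $g\notin H$, the key observation is that $e\notin HgH$: if $e=h_1gh_2$ with $h_1,h_2\in H$, then $g=h_1^{-1}h_2^{-1}\in H$, a contradiction. Hence $HgH$ is a proper subset of $G$ (it omits the identity), and therefore $H\overline{H}=(HgH)g^{-1}\neq G$, which is what we want. Equivalently, one can invoke the partition of $G$ into $(H,H)$-double cosets: $HeH=H$ is one of them with $|H|<|G|$, and $HgH$ is either equal to it — impossible, since $g\notin H$ — or disjoint from it, which forces $|HgH|\le|G|-|H|<|G|$.

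The only real idea here is the rewriting $H\overline{H}=(HgH)g^{-1}$; after that the argument is immediate, and there is essentially no computation to carry out. The one pitfall I would be careful about is that $H\overline{H}$ is in general not a subgroup, so one must not try to argue by applying Lagrange's theorem to it directly; the bookkeeping has to be done through double cosets (or, if one prefers a counting version, through $|H\overline{H}|=|H|^{2}/|H\cap gHg^{-1}|$ together with the fact that $HgH$ is a union of $[H:H\cap g^{-1}Hg]$ left cosets of $H$, neither of which can exhaust $G$). I do not anticipate any genuinely hard step.
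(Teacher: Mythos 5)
Your proof is correct and complete, and it works for infinite groups as well since it never counts anything. Note that the paper states Proposition~\ref{prod-subgp-and-conjugate} without any proof (it is quoted as a known fact), so there is no in-paper argument to compare against; your double-coset packaging is equivalent to the usual one-line proof: if $H\cdot gHg^{-1}=G$, then writing $g=h_1\,(gh_2g^{-1})$ forces $g\in H$, whence $\overline{H}=H$ and $H\overline{H}=H\neq G$ --- which is precisely your observation that $e\in HgH$ would put $g$ in $H$, combined with your disposal of the case $g\in H$. One cosmetic slip in the aside you do not use: the double coset $HgH$ is a union of $[H:H\cap g^{-1}Hg]$ \emph{right} cosets of $H$ (as a union of left cosets the index is $[H:H\cap gHg^{-1}]$); this has no bearing on the main argument.
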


\begin{proposition}\label{solvable-conjugate-or-permutable}
	(Theorem 14, pg. 452, \cite{ore}) Let $G$ be a finite solvable group and $M,N$ be two maximal subgroups of $G$. Then either $MN=G$ or $M$ and $N$ are conjugates in $G$.
\end{proposition}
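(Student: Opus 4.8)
\noindent\emph{Proof idea.} This is the classical theorem of Ore, and I would prove it by induction on $|G|$, reducing to a primitive base case that is handled by a cohomology‑vanishing argument. If $M=N$ there is nothing to prove, so assume $M\neq N$ (hence $\langle M,N\rangle=G$) and $MN\neq G$; the goal is to show that $M$ and $N$ are conjugate.

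\smallskip
\noindent\emph{Step 1: reduction via cores.} Suppose the core $M_G=\bigcap_{g\in G}M^{g}$ is nontrivial. Then it contains a minimal normal subgroup $L$ of $G$, and $L\leq M$. Since $L\trianglelefteq G$ and $N$ is maximal, $LN\in\{N,G\}$, and $LN=G$ would give $G=LN\subseteq MN$, contradicting $MN\neq G$; hence $L\leq N$. Then $G/L$ is solvable of smaller order, $M/L$ and $N/L$ are maximal subgroups with $(M/L)(N/L)\neq G/L$, so by induction they are conjugate in $G/L$, and therefore $M$ and $N$ are conjugate in $G$. Arguing symmetrically when $N_G\neq 1$, we may assume $M_G=N_G=1$; thus $G$ is a primitive solvable group possessing two core‑free maximal subgroups $M$ and $N$. (The degenerate case $M=N=\{1\}$, i.e.\ $G$ of prime order, is trivial and may be set aside at the outset.)

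\smallskip
\noindent\emph{Step 2: reduction to complements.} I would invoke the standard structure theory of primitive solvable groups: $G$ has a unique minimal normal subgroup $L$, it is elementary abelian of order $p^{a}$ for some prime $p$, and $C_G(L)=L$; moreover a subgroup is a core‑free maximal subgroup of $G$ precisely when it is a complement to $L$ (if $H$ is core‑free maximal then $L\not\leq H$, so $HL=G$, and $H\cap L\trianglelefteq\langle H,L\rangle=G$ forces $H\cap L=1$; conversely a Dedekind‑law argument shows every complement to $L$ is maximal and core‑free). Hence $M$ and $N$ are complements to $L$, and the proposition reduces to showing that all complements to $L$ in $G$ are conjugate, i.e.\ $H^{1}(M,L)=0$, where $M\cong G/L$ acts on $L\cong\mathbb{F}_{p}^{\,a}$ faithfully (as $C_M(L)=M\cap C_G(L)=M\cap L=1$) and irreducibly.

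\smallskip
\noindent\emph{Step 3: cohomology vanishing.} This is the step I expect to be the main obstacle. The crucial point is that $O_{p}(M)=1$: a normal $p$‑subgroup acts trivially on every irreducible $\mathbb{F}_{p}M$‑module, whereas $M$ acts faithfully on $L$. Consequently the Fitting subgroup $F=F(M)$ is a nontrivial normal $p'$‑subgroup of the solvable group $M$. By coprime action (Maschke), $L=[L,F]\oplus C_L(F)$ as $\mathbb{F}_{p}M$‑modules; irreducibility of $L$ together with faithfulness of the $M$‑action excludes $L=C_L(F)$, so $C_L(F)=0$. The inflation--restriction sequence for $1\to F\to M\to M/F\to 1$ reads
\[
0\longrightarrow H^{1}\!\big(M/F,\,L^{F}\big)\longrightarrow H^{1}(M,L)\longrightarrow H^{1}(F,L),
\]
and its outer terms vanish: the left one because $L^{F}=C_L(F)=0$, and the right one because $\gcd(|F|,p)=1$. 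Hence $H^{1}(M,L)=0$, so all complements to $L$ in $G$ are conjugate; in particular $M$ and $N$ are conjugate, as required.
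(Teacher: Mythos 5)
Your proposal is correct, but note that the paper does not prove this proposition at all: it is quoted verbatim as a classical result of Ore (Theorem 14 of \cite{ore}) and used as a black box in the proof of Theorem \ref{A5_graph}. Your argument is therefore a genuinely different (self-contained, modern) route: induction on $|G|$ with the core reduction, then the Galois-style structure of a primitive solvable group (unique minimal normal subgroup $L$, elementary abelian with $C_G(L)=L$, core-free maximal subgroups being exactly the complements of $L$ -- the abelianness of $L$ being what makes $M\cap L\trianglelefteq G$ work), and finally conjugacy of all complements via $H^{1}(M,L)=0$, obtained from $O_p(M)=1$, hence $F(M)$ a nontrivial normal $p'$-subgroup with $C_L(F(M))=0$, plugged into inflation--restriction together with coprime vanishing. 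All the steps check out, including the small points you flag implicitly: the degenerate case $M=1$ is excluded so $F(M)\neq 1$, the quotient step lifts conjugacy from $G/L$ to $G$, and the bijection between $L$-conjugacy classes of complements and $H^{1}(G/L,L)$ applies because $L$ is abelian. Ore's original 1939 proof (and the usual textbook treatments) achieve the primitive case by elementary means, without cohomology; your version buys conceptual clarity and a clean identification of where solvability enters ($O_p(M)=1$ forcing a coprime Fitting subgroup), at the cost of importing the cohomological machinery, while the paper's citation-only treatment is all that is needed for its purposes.
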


\subsection{Our Contribution}
In Section \ref{section-solvable}, we establish that any group whose comaximal subgroup graph has  independence number strictly less than $52$ must be solvable. Furthermore, we demonstrate that the alternating group $A_5$ is uniquely determined by its graph, as it attains the threshold independence number of $52$. Moving to Section \ref{section-supersolvable}, we prove that groups with an independence number below $15$ are supersolvable, with the exception of three groups explicitly listed in Theorem \ref{ind-no-supersolvable-theorem}. In Section \ref{section-nilpotent}, we show that independence number smaller than $7$ guarantees nilpotency, except for five specific groups characterized in Theorem \ref{ind-no-nilpotent-theorem}. In each case, we justify the sharpness of these bounds, ensuring they cannot be further improved. Finally, we conclude by discussing some open problems for future research.

\section{Solvability Criteria for $G$ via Independence Number of $\Gamma(G)$}\label{section-solvable}
In this section, we analyze the relationship between the independence number of a group's comaximal subgroup graph and the solvability of the underlying group. Specifically, we establish precise conditions under which bounds on the independence number guarantee the solvability of the group. Our approach combines graph-theoretic techniques with group-theoretic analysis to derive structural properties of groups from their associated graphs. The following theorem is an improvement (in fact, the best improvement possible) of Theorem 3.1 \cite{2nd-paper} which shows that $\alpha(\Gamma(G))\leq 8$ implies that $G$ is solvable. In fact, the following theorem was left as an open problem in \cite{2nd-paper}.

\begin{theorem}\label{ind-no-solvable-theorem}
Let $G$ be a finite group such that $\alpha(\Gamma(G))\leq 51$. Then $G$ is solvable.
\end{theorem}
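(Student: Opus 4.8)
The plan is to argue the contrapositive: if $G$ is not solvable, I will produce an independent set of size $52$ in $\Gamma(G)$. Two elementary observations about $\Gamma$ do most of the bookkeeping. \emph{(i)} For any $N\trianglelefteq G$ the assignment $H\mapsto H/N$ realizes $\Gamma(G/N)$ as the induced subgraph of $\Gamma(G)$ on the vertices containing $N$, because for $N\le H,K$ one has $HK=G$ if and only if $(H/N)(K/N)=G/N$; hence $\alpha(\Gamma(G/N))\le\alpha(\Gamma(G))$. \emph{(ii)} If $H$ is a proper subgroup of $G$ and $K_1,K_2\le H$, then $K_1K_2\subseteq H\subsetneq G$, so no two subgroups of $H$ are adjacent in $\Gamma(G)$; consequently $H$ together with its $Sub(H)-2$ nontrivial proper subgroups forms an independent set, and $\alpha(\Gamma(G))\ge Sub(H)-1$.

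Next I would pass to simple sections. A non-solvable group has a non-abelian composition factor, and an induction on order (a non-abelian simple group that is not minimal simple contains a non-solvable proper subgroup, whose composition factors are again subquotients) shows that some \emph{minimal simple} group $S$ occurs as a section $S\cong H/N$ with $N\trianglelefteq H\le G$. If $H=G$, then (i) gives $\alpha(\Gamma(G))\ge\alpha(\Gamma(S))$; if $H\subsetneq G$, then (ii) gives $\alpha(\Gamma(G))\ge Sub(H)-1\ge Sub(S)-1$, and since $\Gamma(S)$ has exactly $Sub(S)-2$ vertices this is again $\ge\alpha(\Gamma(S))$. Hence the whole theorem reduces to the inequality $\alpha(\Gamma(S))\ge 52$ for every minimal simple group $S$, i.e., by Thompson's classification, for $A_5=PSL(2,4)$, $PSL(2,2^p)$ and $PSL(2,3^p)$, $PSL(2,p)$ ($p>3$ prime, $p\equiv\pm2\pmod5$), $Sz(2^p)$, and $PSL(3,3)$.

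When $S\ne A_5$ this inequality is cheap: all subgroups of order $2$ are pairwise non-adjacent in $\Gamma(S)$ (a product of two involutions has order $4<|S|$), so it suffices that $S$ has at least $52$ involutions. This is clear for $Sz(2^p)$ (with $(q^2+1)(q-1)$ involutions, $q=2^p\ge8$), for $PSL(3,3)$, for $PSL(2,q)$ with $q$ even and $q\ge8$ (namely $q^2-1$ involutions), and for $PSL(2,q)$ with $q$ odd and $q\ge13$ (namely $q(q\pm1)/2$ involutions) — which covers every odd $q$ on Thompson's list except $q=7$. For $PSL(2,7)$ the $21$ subgroups of order $2$, the $28$ of order $3$, and the $8$ of order $7$ are already $57$ mutually non-adjacent subgroups (products of such orders are at most $49<168$).

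The genuinely delicate case, and the main obstacle, is $S=A_5$, where the bound is tight and the full adjacency structure of $\Gamma(A_5)$ must be pinned down. Comparing orders, two subgroups $H,K$ with $|H|\,|K|<60$ are never adjacent, which immediately isolates all $30$ subgroups of orders $2,3,4$; and a short incidence count — each of the $15$ involutions lies in exactly two of the ten copies of $S_3$, in exactly two of the six copies of $D_{10}$, and in exactly one of the five copies of $A_4$ — shows that every $S_3$, every pair of $D_{10}$'s, and every $S_3$--$D_{10}$ pair share an involution, so that the only edges of $\Gamma(A_5)$ are the $A_4$--$C_5$ edges (all present) and the $A_4$--$D_{10}$ edges. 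Deleting the five copies of $A_4$ therefore leaves $52$ pairwise non-adjacent subgroups, giving $\alpha(\Gamma(A_5))\ge52$ and completing the proof. In fact $\alpha(\Gamma(A_5))=52$, since an independent set containing some $A_4$ omits all six $C_5$'s; together with the strict inequalities $\alpha(\Gamma(S))\ge53$ obtained above for the remaining minimal simple groups, a short extra argument then yields the sharpness of the constant $51$ and the uniqueness of $A_5$.
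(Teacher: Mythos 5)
Your proof is correct, and although it ends, like the paper's, at Thompson's classification of minimal simple groups, both the reduction and the verification are genuinely different. The paper argues by a minimal counterexample: it notes each maximal subgroup $M$ has $Sub(M)\leq 52$, invokes its external theorem that a group with at most $52$ subgroups is solvable, concludes $G$ is minimal non-solvable, passes to $G/\Phi(G)$ to reach a minimal simple group, and then disposes of the Thompson families by a computational check (``using SAGE or otherwise''). You instead use two elementary monotonicity facts --- $\Gamma(G/N)$ embeds in $\Gamma(G)$ as an induced subgraph, and a proper subgroup $H$ together with its nontrivial proper subgroups is independent, so $\alpha(\Gamma(G))\geq Sub(H)-1$ --- to pass directly to a minimal simple section $S\cong H/N$; this bypasses both the minimal-counterexample bookkeeping and the citation to the subgroup-count solvability theorem, and the case split ($H=G$ versus $H<G$, using $Sub(H)\geq Sub(S)\geq \alpha(\Gamma(S))+2$) is sound. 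You also replace the computer check by explicit counts: at least $52$ involutions in every Thompson group except $PSL(2,7)$ and $A_5$ (the quoted involution numbers for $PSL(2,q)$, $Sz(q)$, $PSL(3,3)$ are standard and correct), a $57$-element independent set of subgroups of orders $2,3,7$ in $PSL(2,7)$, and a determination of the edges of $\Gamma(A_5)$ showing the $52$ non-$A_4$ vertices are independent; I verified the $A_5$ analysis, where the only nontrivial exclusion needed is $S_3$--$D_{10}$ adjacency, and your incidence count ($15\cdot 2\cdot 2=60=10\cdot 6$ pairs, each pair sharing at most one involution) settles it. One small slip: if your parenthetical claim means that every pair of $S_3$'s shares an involution, that is false (only $15$ of the $45$ pairs do), but it is harmless since $6\cdot 6=36<60$ already rules out those adjacencies. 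Net effect: your route is more self-contained and computer-free, at the cost of quoting classical involution counts, while the paper's route is shorter but leans on its earlier subgroup-counting theorem and on software for the decisive estimates.
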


\begin{proof}
If possible, let $G$ be a counterexample of minimum order, i.e., $G$ is a non-solvable group of minimum order with $\alpha(\Gamma(G))\leq 51$. As $\alpha(\Gamma(G))\leq 51$, every maximal subgroup $M$ of $G$ contains at most $50$ proper non-trivial subgroups (otherwise $M$ along with its proper non-trivial subgroups forms an independent set of size more than $51$). Thus, $Sub(M)\leq 50+2=52$, i.e., $M$ is solvable (by Theorem 2.1 \cite{das-mandal-number-of-subgroups}). Thus every maximal subgroup of $G$ is solvable and hence, all proper subgroups of $G$ are solvable.

As $G$ is not solvable, then $G$ is a minimal non-solvable group and hence $G/\Phi(G)$ is non-abelian simple group, where $\Phi(G)$ denotes the Frattini subgroup of $G$. 
Now, $\alpha(\Gamma(G/\Phi(G)))\leq \alpha(\Gamma(G))\leq 51$ and $G/\Phi(G)$ is non-solvable. This contradicts the minimality of the counterexample $G$, unless $\Phi(G)$ is trivial. 

Thus $G/\Phi(G)\cong G$ is a non-abelian simple group with all proper subgroups of $G$ are solvable, i.e., $G$ is a finite minimal simple group. Now, by J.G. Thompson's classification of finite minimal simple groups, $G$ is isomorphic to one of the following groups:
\begin{itemize}
    \item $PSL(2,2^p)$, where $p$ is a prime.
    \item $PSL(2,3^p)$, where $p$ is an odd prime.
    \item $PSL(2,p)$, where $p>3$ is prime and $5|p^2+1$.
    \item The Suzuki group $Sz(2^p)$, where $p$ is an odd prime.
    \item $PSL(3,3)$.
\end{itemize}
One can check using SAGE \cite{sage} or otherwise that in each of the cases $\alpha(\Gamma(G))\geq 52$, a contradiction. Hence the theorem holds.
\end{proof}

{\remark Thus for every finite non-solvable group $G$, $\Gamma(G)$ has independence number at least $52$. Moreover, the bound in the above theorem is tight as $\alpha(\Gamma(A_5))=52$. }

In the next theorem, we prove that $A_5$ is uniquely identified from its graph. It is to be noted that this is not true for $A_4$ (See Theorem 5.1 \cite{2nd-paper}), i.e., there exist non-isomorphic groups whose comaximal subgroup graphs are isomorphic to $\Gamma(A_4)$.

\begin{theorem}\label{A5_graph}
Let $G$ be a finite group such that $\Gamma(G)\cong \Gamma(A_5)$. Then $G\cong A_5$. 
\end{theorem}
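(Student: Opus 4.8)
The plan is to read off strong combinatorial invariants from the isomorphism $\Gamma(G)\cong\Gamma(A_5)$ and to bootstrap them up to $G\cong A_5$. First, describe $\Gamma(A_5)$ explicitly. The proper non-trivial subgroups of $A_5$ are $15$ copies of $C_2$, $10$ of $C_3$, $5$ of $V_4$, $6$ of $C_5$, $10$ of $S_3$, $6$ of $D_{10}$ and $5$ of $A_4$, so $\Gamma(A_5)$ has $57$ vertices and $\mathrm{Sub}(A_5)=59$. Since $HK=A_5$ forces $|H|\,|K|\ge 60$, a short case analysis on orders (using Proposition~\ref{prod-subgp-and-conjugate} to kill the conjugate pairs) shows that the only edges are $A_4$--$C_5$ and $A_4$--$D_{10}$, and that these hold for \emph{every} such pair; hence $\Gamma^{*}(A_5)\cong K_{5,12}$, with the five $A_4$'s forming the part of size $5$, and $\Gamma(A_5)$ is this $K_{5,12}$ together with $40$ isolated vertices. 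In particular $\alpha(\Gamma(A_5))=52$ and $\Gamma(A_5)$ is triangle-free, i.e.\ $\omega(\Gamma(A_5))=2$. Consequently, if $\Gamma(G)\cong\Gamma(A_5)$ then $G$ has exactly $57$ proper non-trivial subgroups (so $\mathrm{Sub}(G)=59$), $\Gamma^{*}(G)\cong K_{5,12}$, and no three proper subgroups of $G$ are pairwise comaximal.

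The next block of steps rules out everything except $A_5$. If $G$ is a $p$-group, then either $G$ is cyclic, so its subgroups are linearly ordered and $\Gamma(G)$ has no edge, contradicting $\Gamma^{*}(G)\cong K_{5,12}$; or $G$ is non-cyclic, so its $\tfrac{p^{d}-1}{p-1}\ge 3$ distinct maximal subgroups are normal and pairwise comaximal, producing a triangle in $\Gamma(G)$, again a contradiction. If $G$ is nilpotent but not a $p$-group, then $\mathrm{Sub}(G)$ is a product of at least two integers each $\ge 2$, contradicting that $\mathrm{Sub}(G)=59$ is prime. So $G$ is not nilpotent.

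Now suppose $G$ is solvable (and not nilpotent). By Proposition~\ref{solvable-conjugate-or-permutable} together with Proposition~\ref{prod-subgp-and-conjugate}, two maximal subgroups of $G$ are comaximal exactly when they are non-conjugate, so the maximal subgroups span a complete multipartite induced subgraph of $\Gamma(G)$ whose parts are the conjugacy classes of maximal subgroups. Since $\omega(\Gamma(G))=2$ there are at most two such classes; a single class would make all maximal subgroups of $G$ conjugate, forcing $G$ to be a cyclic $p$-group (a well-known fact), which is already excluded. Hence there are exactly two classes $\mathcal C_{1},\mathcal C_{2}$, lying one in each part of $\Gamma^{*}(G)\cong K_{5,12}$, so $|\mathcal C_{1}|\le 5$ and $|\mathcal C_{2}|\le 12$; moreover each $|\mathcal C_{i}|$ divides the (prime-power) index of a member of $\mathcal C_{i}$ and is therefore itself a prime power, which already rules out $|\mathcal C_{2}|=12$. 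The plan is then to exploit the rigidity of $K_{5,12}$: a non-isolated subgroup lying on the side of size $5$ is comaximal with every member of $\mathcal C_{2}$, hence lies below no member of $\mathcal C_{2}$, and symmetrically for the other side; thus the part of $G$'s subgroup lattice lying below the maximal subgroups splits cleanly into a ``$\mathcal C_{1}$-part'' and a ``$\mathcal C_{2}$-part'', which overlap only in (isolated) subgroups below $\mathrm{core}(M_1)\cap\mathrm{core}(M_2)$. Feeding this decomposition into the exact count $\mathrm{Sub}(G)=59$, together with the constraints on $|\mathcal C_{1}|,|\mathcal C_{2}|$ and on the indices of the maximal subgroups, bounds $|G|$, after which a finite check (in SAGE \cite{sage} if needed) eliminates the solvable case. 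This is the step I expect to be the main obstacle: the $p$-group, nilpotent and non-solvable cases are short, whereas squeezing a contradiction out of the two-conjugacy-class configuration requires careful subgroup-lattice bookkeeping.

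Finally, if $G$ is non-solvable, take a non-solvable subquotient $S=H/K$ of $G$ of least order; by the structure of minimal non-solvable groups used in the proof of Theorem~\ref{ind-no-solvable-theorem}, $S/\Phi(S)$ is non-abelian simple, and minimality of $|S|$ forces $\Phi(S)=1$, so $S$ is a minimal simple group and a section of $G$, whence $\mathrm{Sub}(S)\le\mathrm{Sub}(H)\le\mathrm{Sub}(G)=59$. Running through Thompson's list of minimal simple groups and computing the number of subgroups of each (in SAGE \cite{sage} or otherwise), only $A_5$ has as few as $59$ subgroups, so $S\cong A_5$ and $\mathrm{Sub}(S)=\mathrm{Sub}(H)=\mathrm{Sub}(G)=59$. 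The equality $\mathrm{Sub}(H)=\mathrm{Sub}(G)$ forces $H=G$, and $\mathrm{Sub}(G/K)=\mathrm{Sub}(G)$ forces $K=1$, so $G\cong S\cong A_5$, completing the argument.
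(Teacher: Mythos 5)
Your description of $\Gamma(A_5)$, the reduction $\mathrm{Sub}(G)=59$, and the exclusion of nilpotent groups are all correct and essentially match the paper. Your non-solvable endgame (minimal non-solvable subquotient $\Rightarrow$ minimal simple group, then Thompson's list and a subgroup count forcing $G\cong A_5$) is a reasonable re-derivation of what the paper simply cites as Theorem 2.2 of \cite{das-mandal-number-of-subgroups}, though note that ``computing $\mathrm{Sub}$ in SAGE'' is not literally a finite check, since Thompson's list consists of infinite families; you would need a growth argument (or the cited theorem) to conclude that $A_5$ is the only minimal simple group with at most $59$ subgroups.

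The genuine gap is the solvable, non-nilpotent case, which is precisely where the bulk of the paper's proof lives. You correctly get two conjugacy classes of maximal subgroups, one in each part of $K_{5,12}$, with $|\mathcal{C}_1|\le 5$, $|\mathcal{C}_2|\le 12$ and each class size equal to $1$ or a prime power, but the assertion that ``feeding this decomposition into $\mathrm{Sub}(G)=59$ bounds $|G|$, after which a finite check eliminates the solvable case'' is exactly the unproved step: nothing you have written bounds $|G|$, and without a bound there is no finite check to run. The paper has to work hard here: it first pins down $|G|=p^{\alpha}q^{\beta}$, proves that every element outside the normal maximal subgroup $M\in V_1$ has order divisible by $q^{\beta}$ (so Sylow $q$-subgroups are cyclic), rules out Sylow $q$-subgroups being maximal, shows the Sylow $p$-subgroup is normal, forces $|V_1|=5$, $3\le\beta\le 5$, $p\in\{2,3\}$, proves $M$ is nilpotent via a delicate analysis of normalizers (including exhaustive GAP searches at a few specific orders), and only then reaches the contradiction $46=\beta\cdot\mathrm{Sub}(P)$ versus $\beta\le 5$. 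Your ``$\mathcal{C}_1$-part/$\mathcal{C}_2$-part'' lattice decomposition is a plausible starting point, but as stated it does not control the number or orders of the isolated vertices ($40$ of them) nor the exponents $\alpha,\beta$, so the counting you propose cannot yet be carried out; you yourself flag this as the main obstacle, and it is: as it stands the proposal proves the theorem only modulo this case.
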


\begin{proof}
    We start by noting that $\Gamma(A_5)$ is a disjoint union of a complete bipartite graph $K_{5,12}$ and $40$ isolated vertices. Thus $\Gamma(G)$ has $57$ vertices, i.e., $Sub(G)=59$.

    We first show that $G$ is not nilpotent. Suppose $G$ is nilpotent and hence it is the direct product of its Sylow subgroups. Now, as $59$ is prime, $G$ must be a $p$-group. Again, as $\Gamma(G)$ is a bipartite graph, by Theorem 3.7 \cite{1st-paper}, $G$ must be a cyclic $p$-group and hence $\Gamma(G)$ must be edgeless, a contradiction.

    Now, we show that $G$ is not solvable. Suppose $G$ is a non-nilpotent solvable group. So the number of distinct prime factors of $|G|$ is $\geq 2$. If $|G|$ has at least three distinct prime factors $p_1,p_2,p_3$ (say), let $H_{p_1},H_{p_2},H_{p_3}$ be Sylow $p_i$-subgroups of $G$, respectively. Since $G$ is solvable, by Hall's theorem, they have Hall complements $K_{p_1},K_{p_2},K_{p_3}$ (say). Thus we have three edges of the form $H_{p_i}\sim K_{p_i}$ in $\Gamma(G)$. As $\Gamma^*(G)$ is a complete bipartite graph and $H_{p_i}\not\sim H_{p_j}$ for $i\neq j$, we must have $H_{p_1}\sim K_{p_2}$, i.e., $H_{p_1}K_{p_2}=G$. However, $|H_{p_1}K_{p_2}|$ does not contain the factor $p_2$, a contradiction. Thus $|G|$ has exactly two distinct prime factors, i.e., $|G|=p^\alpha q^\beta$. We start by noting that: 
    \begin{itemize}
        \item any Sylow subgroup of $G$ is a vertex of $\Gamma^*(G)$.
        \item Sylow $p$-subgroups and Sylow $q$-subgroups belong to different partite sets of $\Gamma^*(G)$. Let us denote the partite set containing Sylow $p$-subgroups as $V_1$ and the partite set containing Sylow $q$-subgroups as $V_2$.
        \item any $p$-subgroup or $q$-subgroup of $G$ which is not a Sylow subgroup is an isolated vertex in $\Gamma(G)$.
        \item if $A$ is a vertex in $\Gamma^*(G)$, then either $p^\alpha$ or $q^\beta$ divides $|A|$, i.e., $A$ contains a Sylow subgroup of $G$.
    \end{itemize} 

    Since $G$ is solvable of order $p^\alpha q^\beta$, $G$ must have a maximal normal subgroup $M$ of prime index. Let $|M|=p^\alpha q^{\beta-1}$. Thus $M\in V_1$
    
    {\it Claim 1:} The order of any element in $G\setminus M$ is a multiple of $q^\beta$.\\
    {\it Proof of Claim 1:} Let $x \in G\setminus M$ and $H=\langle x \rangle$. Then $MH=G$, i.e., $M\sim H$, i.e., $M\in V_1$ and $H\in V_2$. Thus $|H|$ and hence $\circ(x)$ is a multiple of $q^\beta$. So the Claim holds.
    
    Thus there exists $y_1 \in H\setminus M$ of order $q^\beta$. Hence a Sylow $q$-subgroup $Q_1=\langle y_1 \rangle$ of $G$ is cyclic. Again, as $G$ is a non-nilpotent group, $G$ must have at least three maximal subgroups $M_1,M_2$ and $M_3$, other than $M$ \cite{miller}. Also no two of them are adjacent and hence $M_1,M_2,M_3 \in V_2$. In fact, all maximal subgroups $M_i$ of $G$ other than $M$ belong to $V_2$. Moreover, using Proposition \ref{solvable-conjugate-or-permutable}, any two maximal subgroups in $V_2$ are conjugate and hence are of same order.

    {\it Claim 2:} No Sylow $q$-subgroup of $G$ is maximal in $G$.\\
    {\it Proof of Claim 2:} If possible, Sylow $q$-subgroups of $G$ are maximal in $G$. Then $V_2$ consists only of Sylow $q$-subgroups of $G$. Thus from Claim 1, any element of $G\setminus M$ is of order $q^\beta$. Thus the number of Sylow $q$-subgroups of $G$, $$n_q=\dfrac{|G\setminus M|}{\varphi(q^\beta)}=\dfrac{p^\alpha q^{\beta-1}(q-1)}{q^{\beta-1}(q-1)}=p^\alpha=5, \mbox{ i.e., }\alpha=1, \mbox{ i.e., }N\cong \mathbb{Z}_5.$$

    Again, as $n_q=1+qk=5$, we must have $q=2$. As $G$ has a normal subgroup of order $5$, it has subgroups of order $5,5\cdot 2,5\cdot 2^2,\ldots,5\cdot 2^{\beta-1}$ and all of them belong to $V_1$. Thus $\beta\leq 12$. Hence $G\cong \mathbb{Z}_5\rtimes \mathbb{Z}^\beta_2$ with $\beta\leq 12$.

     Now, let us consider the $40$ isolated vertices in $\Gamma(A_5)$. They correspond to the $2$-subgroups of $G$ which are not Sylow subgroups. Let $H$ and $K$ be two Sylow $2$-subgroup of $G$. Then $5\cdot 2^\beta=|G|>|HK|=\frac{|H||K|}{|H\cap K|}$, i.e., $|H\cap K|\geq 2^{\beta-2}$. Using this fact, it can be shown that $G$ has unique subgroups of order $2,2^2,2^3,\ldots, 2^{\beta-3}$. As $\beta\leq 12$, this add up to at most $9$ isolated vertices in $\Gamma(A_5)$. Thus the remaining at least $31$ isolated vertices correspond to subgroups of order $2^{\beta-2}$ and $2^{\beta-1}$. However we show that we can have at most $5$ subgroups of order $2^{\beta-2}$ and $2^{\beta-1}$ each. Suppose not, then at least two subgroups of order $2^{\beta-2}$ ( or $2^{\beta-1}$) is contained in one of the $5$ Sylow $2$-subgroups of $G$. But this cannot happen in a cyclic group. So we get a contradiction. Hence, Claim 2 holds.
     
    {\it Claim 3:} The Sylow $p$-subgroup is normal in $G$.\\
    {\it Proof of Claim 3:} Let $P$ be a Sylow $p$-subgroup of $M$. Then $P$ is also a Sylow $p$-subgroup of $G$. Then, by using Frattini's argument, we have $G=M\cdot N_G(P)$. Then either $N_G(P)=G$ or $N_G(P)\in V_2$ and $M\sim N_G(P)$. In the later case, as $N_G(P)\in V_2$ we have $q^\beta | |N_G(P)|$. Also as $P\subseteq N_G(P)$, we have $p^\alpha||N_G(P)|$, i.e., $N_G(P)=G$, a contradiction. Thus we have $N_G(P)=G$, i.e., $P\lhd G$. Hence, the claim holds.

    In the rest of the proof, we denote this unique Sylow $p$-subgroup of $G$ by $P$. Also note that, as $G$ is not nilpotent, no Sylow $q$-subgroup of $G$ is normal in $G$.

    {\it Claim 4:} No subgroup in $V_2$ is normal in $G$.\\
    {\it Proof of Claim 4:} Let $H\lhd G$ be in $V_2$ and $Q_H$ be a Sylow $q$-subgroup of $H$. Clearly $Q_H$ is also a Sylow $q$-subgroup of $G$ and $Q_H \in V_2$. Then, by using Frattini's argument, we have $G=H\cdot N_G(Q_H)$. As $Q_H$ is not normal in $G$, we have $N_G(Q_H)\neq G$. Thus $H\sim N_G(Q_H)$ and $N_G(Q_H)\in V_1$. Again as $Q_H\subseteq N_G(Q_H)$, we have $N_G(Q_H)\in V_2$, a contradiction. Hence, the claim holds.

    {\it Claim 5:} $|V_1|=5,|V_2|=12$ and $3\leq \beta\leq 5$.\\
    {\it Proof of Claim 5:} From Claim 2 and Claim 4, it follows that $V_2$ has at least $3$ maximal subgroups and $1+qk$ Sylow $q$-subgroups, thus $|V_2|>5$ and hence $|V_2|=12$ and $|V_1|=5$. Again, since $P$ is a normal subgroup of order $p^\alpha$, $V_1$ contains at least one subgroup each of orders $p^\alpha,p^\alpha q,p^\alpha q^2,\ldots,p^\alpha q^{\beta-1}$ and hence $\beta \leq 5$. Now, if $\beta=1$, then $V_1$ has exactly one element, and if $\beta=2$, then $V_1$ has exactly two elements, namely $P$ and $M$. Both cases lead to a contradiction, and hence $\beta\geq 3$.

    {\it Claim 6:} $p=2$ or $3$.\\
    {\it Proof of Claim 6:} Let $H$ be any subgroup in $V_2$. As $H$ is not normal in $G$ (from Claim 4), $H\subseteq N_G(H)\neq G$. Thus, the number of conjugates of $H$ in $G$ (and in $V_2$)  $=[G:N_G(H)]=p^\alpha q^\beta/|N_G(H)|$ is a multiple of $p$. Thus, $|V_2|=12$ must be equal to the sum of some multiples of $p$. Thus, the only admissible values for $p$ is $2$ or $3$.

    In the rest of the proof, we denote the Sylow $q$-subgroups of $G$ by $Q_i$. Also we note that every $Q_i$ is contained in some maximal subgroup $M_j$ of $G$ where $M_j \in V_2$.

    {\it Claim 7:} The unique maximal subgroup $M \in V_1$ is nilpotent.\\
    {\it Proof of Claim 7:} We prove it separately for the cases $p=2$ and $3$.\\
    {\it Case I: $p=3$.} Since the number of Sylow $q$-subgroups of $G$, $n_q=1+qk$ which divides $3^\alpha$ and $1<n_q\leq |V_2|=12$, $\alpha$ is either $1$ or $2$ and in any case $q=2$. Thus $|G|=3^\alpha\cdot 2^\beta$ and $|M_i|=3^s\cdot 2^\beta$ where $s<\alpha$. Again, since $|V_2|=12$ and the number of conjugates of $M_1$ in $G$ is equal to $[G:N_G(M_1)]=[G:M_1]$, it is either $3$ or $9$. 
    
    If the number of conjugates of $M_1$ is $9$, then $|M_1|=3^{\alpha-2}\cdot 2^\beta$. Also, as $|V_2|=12$, the number of Sylow $q$-subgroups of $G$ is $3$, i.e., $3=[G:N_G(Q_i)]$, i.e., $|N_G(Q_i)|=3^{\alpha-1}\cdot 2 ^\beta$. Thus $V_2$ contains a subgroup $N_G(Q_i)$ of order $3^{\alpha-1}\cdot 2 ^\beta$ which is different from $Q_i$'s and $M_i$'s, a contradiction. Thus the number of conjugates of $M_1$ is $3$ and $|M_1|=3^{\alpha-1}2^\beta$ with $\alpha\geq 2$. Now, we consider the number of Sylow $q$-subgroups of $G$. In this case $n_q=3$ or $9$.

    If $n_q=9$ and if $\alpha\geq 3$, then $|N_G(Q_i)|=3^{\alpha-2}2^\beta$ and $Q_i\subsetneq N_G(Q_i)$. Also we have three $M_i$'s distinct from $N_G(Q_i)$'s in $V_2$. This exceeds the count $|V_2|=12$. On the other hand, when $n_q=9$ and $\alpha=2$, we have $|G|=3^2\cdot 2^\beta$ with $3\leq \beta \leq 5$. However, an exhaustive search on groups of these orders using GAP reveals that none of them yields a graph isomorphic to $\Gamma(A_5)$. Thus $n_q=3$.

    Thus $V_2$ contains three Sylow $2$-subgroups $Q_1,Q_2$ and $Q_3$ of order $2^\beta$ and three maximal subgroups $M_1,M_2$ and $M_3$ of order $3^{\alpha-1}2^\beta$. As $|N_G(Q_i)|=3^{\alpha-1}2^\beta$ and each of $N_G(Q_1)$, $N_G(Q_2)$ and $N_G(Q_3)$ are distinct (since $Q_i$ is a unique Sylow $q$-subgroup of $N_G(Q_i)$), without loss of generality, we can assume $N_G(Q_i)=M_i$ for $i=1,2,3$.

    Now, we show that $G$ has a unique subgroup of order $2^{\beta-1}$. If not, let $L_1,L_2$ be two subgroups of order $2^{\beta-1}$ in $G$. Without loss of generality, let $L_1\subseteq Q_1$ and $L_2\subseteq Q_2$. As $Q_1$ is cyclic, $L_2\not\subset Q_1$. Again as $Q_1$ is the unique Sylow $2$-subgroup of $M_1$, we have $Q_2 \not\subset M_1$. Thus $|M_1|<|M_1Q_2|<|G|$, i.e., $$3^{\alpha-1}2^\beta < \dfrac{3^{\alpha-1}2^\beta \cdot 2^\beta}{|M_1\cap Q_2|}< 3^\alpha 2^\beta.$$ As $M_1\cap Q_2$ is a $2$-group, its order must be $2^{\beta-1}$ to satisfy the above inequality. Thus $L_2$ and $M_1\cap Q_2$ are two subgroups of order $2^{\beta-1}$ in the cyclic group $Q_2$. Hence $L_2=M_1\cap Q_2$ and $L_2\subseteq M_1$. Now, as $Q_1$ is the unique Sylow $2$-subgroup of $M_1$ and $L_2$ is a $2$-subgroup of $M_1$, we must have $L_2\subseteq Q_1$, a contradiction. Thus $G$ has a unique subgroup of order $2^{\beta-1}$, say $L$. So, it follows that $M\cong P\times L$ is nilpotent.

    {\it Case II: $p=2$.} Proceeding as in the above case, the number of Sylow $q$-subgroup, $n_q$ is either $2^2$ or $2^3$. If $n_q=[G:N_G(Q_i)]=8$, then $|N_G(Q_i)|=2^{\alpha-3}q^\beta$. Again, the number of maximal subgroups of $G$ in $V_2$ is $[G:N_G(M_i)]=[G:M_i]=2$ or $4$. However, if $[G:M_i]=2$, then we get a normal subgroup $M_i$ in $V_2$, contradicting Claim 4. Thus there are exactly four maximal subgroups in $V_2$ and all of them are of order $2^{\alpha-2}q^\beta$. If $\alpha>3$, then $N_G(Q_i)\neq Q_i,M_j$ and as a result, $V_2$ contain more than $12$ subgroups, a contradiction. Thus $\alpha=3$. Hence $N_G(Q_i)= Q_i$. Again, as $|M_i|=2q^\beta$, $Q_i$ is normal in $M_i$, i.e., $N_G(Q_i)=M_i$, a contradiction. Hence $n_q=4$ and $q=3$, i.e., $[G:N_G(Q_i)]=4$ and hence $|N_G(Q_i)|=2^{\alpha-2}3^\beta$.

    Again, the number of maximal subgroups of $G$ in $V_2$ is $[G:N_G(M_i)]=[G:M_i]=4$ or $8$. If it is $8$, we have $[G:N_G(M_i)]=[G:M_i]=8$, i.e., $|M_i|=2^{\alpha-3}3^\beta$. As $|N_G(Q_i)|=2^{\alpha-2}3^\beta>|M_i|$, $M_i$ is not a maximal subgroup, a contradiction. Thus there are exactly four maximal subgroups in $V_2$ and all of them are of order $2^{\alpha-2}3^\beta$. As $Q_i$'s are unique normal Sylow $q$-subgroups of $N_G(Q_i)$, we have $Q_i\neq Q_j \Rightarrow N_G(Q_i)\neq N_G(Q_j)$ and without loss of generality $N_G(Q_i)=M_i$ for $i=1,2,3,4$. Now, proceeding similarly as in Case - I, we can show that $G$ has a unique subgroup of order $3^{\beta-1}$ and hence $M$ is nilpotent. Thus Claim 7 holds.

    Thus $M\cong P \times L$, where $L$ is the unique cyclic subgroup of $G$ of order $q^{\beta-1}$. Hence $Sub(M)=Sub(P)\cdot Sub(L)=Sub(P)\cdot \beta$. It is to be noted that all subgroups of $G$, except those in $V_2$ are subgroups of $M$, using Claim 1. Hence $$Sub(M)=1(trivial~subgroup)+40(isolated~vertices)+5(in~V_1)=46=\beta\cdot Sub(P).$$
    Hence the admissible values of $\beta$ are $1,2,23,46$, which contradicts Claim 5.

    Hence $G$ must be a non-solvable group with $Sub(G)=59$. Then by Theorem 2.2 \cite{das-mandal-number-of-subgroups}, we have $G\cong A_5$.
\end{proof}

\section{Supersolvability Criteria for $G$ via Independence Number}\label{section-supersolvable}
In this section, we investigate how the independence number of a group's comaximal subgroup graph relates to the supersolvability of the group. While we establish that groups with sufficiently small independence numbers are typically supersolvable, our results reveal an important distinction from the solvability criteria discussed in Section \ref{section-solvable}: there exist three exceptional cases where this implication fails to hold.

\begin{theorem}\label{ind-no-supersolvable-theorem}
Let $G$ be a finite group such that $\alpha(\Gamma(G))\leq 14$. Then $G$ is supersolvable, or $G$ is isomorphic to one of the following groups: $A_4,SL(2,3),(\mathbb{Z}_2\times \mathbb{Z}_2)\rtimes \mathbb{Z}_9$.
\end{theorem}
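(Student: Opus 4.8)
The plan is to run a minimal-counterexample argument parallel to the proof of Theorem~\ref{ind-no-solvable-theorem}, with Thompson's list of minimal simple groups replaced by Doerk's classification of minimal non-supersolvable groups. Since $14<52$, Theorem~\ref{ind-no-solvable-theorem} already forces $G$ to be solvable, so only supersolvability is at issue. Suppose the statement fails; let $G$ be a group of minimum order that is non-supersolvable, has $\alpha(\Gamma(G))\le 14$, and is not isomorphic to $A_4$, $SL(2,3)$, or $(\mathbb{Z}_2\times\mathbb{Z}_2)\rtimes\mathbb{Z}_9$.

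The recurring tool is: for every proper subgroup $H<G$ the $Sub(H)-1$ nontrivial subgroups of $H$ are pairwise non-adjacent in $\Gamma(G)$ (their products lie inside $H\ne G$), whence $Sub(H)\le\alpha(\Gamma(G))+1\le 15$ and $\alpha(\Gamma(H))\le Sub(H)-2\le 13$; by minimality, then, every proper subgroup of $G$ is supersolvable or isomorphic to one of the three exceptions. The next step is to show that no proper subgroup is an exception. Such an $H$ would be maximal in $G$: any $L$ with $H<L<G$ is a proper subgroup, so $Sub(L)\le 15$, impossible since $Sub(L)>Sub(H)$ while, when $H\cong A_4$, neither larger exception contains a copy of $A_4$. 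But then $G$ is solvable with a maximal subgroup isomorphic to one of the exceptions, and the structure of primitive solvable groups (a faithful linear action of $H$ of small degree on the socle, or $H\lhd G$ of tiny index) confines $|G|$ to a short explicit list, all of whose members are seen via GAP to have $\alpha(\Gamma(\cdot))\ge 15$ --- a contradiction. Hence every proper subgroup of $G$ is supersolvable, i.e. $G$ is a minimal non-supersolvable group.

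Now I would apply Doerk's theorem: such a $G$ is solvable with $|G|=p^aq^b$, has a normal Sylow $p$-subgroup and a cyclic Sylow $q$-subgroup, and $G/\Phi(G)\cong \mathbb{Z}_p^{\,m}\rtimes C$ is a Frobenius group with elementary abelian kernel of order $p^m$ ($m\ge 2$) and cyclic complement $C$ of $q$-power order acting faithfully and irreducibly. Since $\alpha(\Gamma(G/\Phi(G)))\le\alpha(\Gamma(G))\le14$, a direct computation of these graphs ends this stage: when $|C|=q$, the only edges of $\Gamma(\mathbb{Z}_p^m\rtimes\mathbb{Z}_q)$ join the kernel to the $p^m$ Sylow $q$-subgroups, so it equals $K_{1,p^m}$ together with the isolated proper nontrivial subgroups of $\mathbb{Z}_p^m$, giving $\alpha=p^m+Sub(\mathbb{Z}_p^m)-2$; when $|C|>q$, the proper subgroup $\mathbb{Z}_p^m\rtimes\mathbb{Z}_2$ has many subgroups and makes $\alpha$ even larger. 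A short case check ($p=2,m=2$ gives $7$; $p=3,m=2$ admits no valid $q$; everything else exceeds $14$) then forces $G/\Phi(G)\cong A_4$.

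Finally, with $G/\Phi(G)\cong A_4$ and $\Phi(G)\ne 1$ (else $G\cong A_4$), one has $|G|=2^a3^b$, and Doerk's description of $\Phi(G)$ together with the fact that $G$ has no proper subgroup isomorphic to an exception forces the normal Sylow $2$-subgroup to be $\mathbb{Z}_2\times\mathbb{Z}_2$ or $Q_8$ and the cyclic Sylow $3$-subgroup to have order $3$ or $9$ --- any larger Frattini cover of $A_4$ would contain a copy of $A_4$, of $SL(2,3)$, or of $(\mathbb{Z}_2\times\mathbb{Z}_2)\rtimes\mathbb{Z}_9$; Proposition~\ref{min-no-subgp-p-groups} can be used here to keep the subgroup count honest. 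Thus $G$ is one of $SL(2,3)$, $(\mathbb{Z}_2\times\mathbb{Z}_2)\rtimes\mathbb{Z}_9$, or $Q_8\rtimes\mathbb{Z}_9$, and computing $\alpha(\Gamma(Q_8\rtimes\mathbb{Z}_9))=17$ rules out the last, leaving the two permitted exceptions --- contradicting the choice of $G$. For sharpness, the three exceptional groups are non-supersolvable with $\alpha(\Gamma(\cdot))$ equal to $7$, $12$, $11$, all at most $14$, while an explicit non-supersolvable group of larger order has independence number exactly $15$ (verified by GAP), so the threshold cannot be raised. The step I expect to be the crux is eliminating the exceptional groups as proper subgroups in the second paragraph: this does not follow from the counting bound $Sub(H)\le 15$ alone, requires the primitive-solvable-group analysis plus a finite check, and is precisely where the three sporadic exceptions could a priori re-enter the statement.
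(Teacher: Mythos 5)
Your overall strategy (minimal counterexample plus a classification of minimal non-supersolvable groups, in analogy with the paper's proof of Theorem \ref{ind-no-solvable-theorem}) is genuinely different from the paper, which instead uses Huppert's criterion to produce a non-normal maximal subgroup $M$ of non-prime index, bounds $[G:M]$ by the conjugacy/independence argument of Proposition \ref{prod-subgp-and-conjugate}, and then narrows $M$ down claim by claim before finite computer checks. Your route could in principle work, but as written it has a genuine gap: the structure theorem you attribute to Doerk is misstated. A minimal non-supersolvable group need not have order $p^aq^b$, need not have a cyclic Sylow $q$-subgroup, and $G/\Phi(G)$ need not be a Frobenius group $\mathbb{Z}_p^m\rtimes C$ with $C$ cyclic of prime-power order. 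Doerk's theorem only gives a normal Sylow $p$-subgroup $P$ with $P/\Phi(P)$ a non-cyclic chief factor and $G/P$ \emph{supersolvable}; the complement can be non-cyclic and of composite order. Concretely, $\mathbb{F}_7^2\rtimes S_3$ (standard two-dimensional action, order $294$, three prime divisors, not Frobenius since a transposition fixes a line) and $\mathbb{F}_5^2\rtimes Q_8$ (with $Q_8\le GL(2,5)$) are minimal non-supersolvable: every maximal subgroup is either a complement or $V\rtimes M_1$ with $M_1$ maximal in the complement acting with eigenvalues in the prime field, hence supersolvable. Your third-paragraph case analysis (``$|C|=q$'' versus ``$|C|>q$'') therefore misses whole families of candidates; each would have to be excluded by explicit independence-number estimates, which your proposal does not supply.

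A secondary, fixable weakness is the step you yourself flag as the crux: eliminating $A_4$, $SL(2,3)$, $(\mathbb{Z}_2\times\mathbb{Z}_2)\rtimes\mathbb{Z}_9$ as maximal subgroups of a larger counterexample. The appeal to ``the structure of primitive solvable groups'' does not by itself confine $|G|$ to a short list: $\mathbb{F}_p^3\rtimes A_4$ with an irreducible action has $A_4$ maximal for arbitrarily large $p$. What actually makes the list finite is Proposition \ref{prod-subgp-and-conjugate}: a non-normal maximal $H$ is self-normalizing, so its $[G:H]$ conjugates form an independent set, forcing $[G:H]\le 14$ and hence $|G|\le 14\cdot 36$; only then is a GAP sweep legitimate. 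You should also substantiate the final stage (which Frattini covers of $A_4$ avoid containing an exception) rather than assert it. None of these objections threatens the truth of the theorem, but the argument as proposed is not complete, principally because the classification it leans on is stronger than what is actually true.
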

\begin{proof}
     Let $G$ be a finite group such that $\alpha(\Gamma(G))\leq 14$. Clearly $G$ is solvable. Suppose $G$ is not supersolvable. Then there exists a maximal subgroup $M$ of $G$ such that $[G:M]=p^m$ where $p$ is a prime with $m>1$. Also $M$ is not normal in $G$, as otherwise $G/M$ will have proper non-trivial normal subgroup contradicting that $M$ is maximal in $G$.
     
     {\it Claim 1:} $[G:M]=4$ or $8$ or $9$.\\
     {\it Proof of Claim 1:} If $p\geq 5$, then we have $[G:M]\geq 25$, i.e., $M$ has at least $25$ conjugates in $G$ and the conjugates forms an independent set in $\Gamma(G)$ (using Proposition \ref{prod-subgp-and-conjugate}) of size $\geq 25$, a contradiction. So $p=2$ or $3$. Using same argument of constructing independent sets using conjugates of $M$ in $G$, we can show that $p=3,m=2$ and $p=2$ with $m=2,3$ are the only possible options.

     {\it Claim 2:} $[G:M]\neq 8$ or $9$.\\
     {\it Proof of Claim 2:} Suppose $[G:M]=8$ or $9$. As $M$ is solvable, $M$ has a normal maximal subgroup, say $X$. As $X$ is normal in $M$ and $M$ is maximal in $G$, we have $N_G(X)=M$ or $G$. If $N_G(X)=M$, then the number of conjugates of $X$ in $G$ is $[G:N_G(X)]=[G:M]\geq 8$. So, $M$ and its conjugates, and $X$ and its conjugates form an independent set of size at least $16$, a contradiction.
     
     So $N_G(X)=G$, i.e., $X$ is normal in $G$. Then $X$ is contained in all conjugates of $M$. If possible, let $Y$ be another maximal subgroup of $M$. If $Y$ is contained in any other conjugate $M_i$ of $M$, then $M=XY=M_i$, a contradiction. Thus $Y$ is contained only in one conjugate of $M$, i.e., $M$ itself. Also other conjugates of $M$ contain a maximal subgroup $Y_i$ which is conjugate to $Y$. By the above argument, the number of such conjugates of $Y$ is at least $8$. Now, $M$ and its conjugates, $X$, $Y$ and its conjugates forms an independent set of size at least $17$ in $\Gamma(G)$, a contradiction. Thus $M$ has a unique maximal subgroup $X$, and hence $M$ is a cylic $p$-group.

     If $[G:M]=8$, then $|G|=2^3\cdot p^n$, where $p>2$. Again, as $M$ is a Sylow $p$-subgroup of $G$, the number of conjugates of $M$ must satisfy $1+pk=8$ which implies $p=7$, i.e., $|G|=2^3\cdot 7^n$. Note that $G$ must have at least $3$ subgroups of order $2$ and at least three subgroups of order $4$, as otherwise we get subgroups of order $2\cdot 7^n$ and $4\cdot 7^n$ containing $M$, contradicting its maximality. Now suppose $n\geq 2$, then $8$ conjugates of $M$, $3$ subgroups of order $2$, $3$ subgroups of order $4$ and one subgroup of order $7$ forms an independent set of size $15$ in $\Gamma(G)$, a contradiction. Thus $n=1$, i.e., $|G|=2^3\cdot 7=56$. Now by checking all non-supersolvable groups of order $56$, we see that none of them admits $\alpha(\Gamma(G))\leq 14$. Thus $[G:M]\neq 8$.

     Similarly as above, if $[G:M]=9$, we get $|G|=3^2\cdot 2^n$. Arguing as above one can show that $n\leq 2$. Thus the possible options for $|G|$ are $3^2\cdot 2$ and $3^2\cdot 2^2$, i.e., $18$ and $36$. Again one can check exhaustively using SAGE \cite{sage} that there does not exist any non-supersolvable group of these orders which admit a cyclic maximal subgroup of index $9$. Thus $[G:M]\neq 9$. Thus Claim 2 holds and we have $[G:M]=4$.

     {\it Claim 3:} $M$ is a cyclic group with at most two distinct prime factors.\\
      {\it Proof of Claim 3:} It is clear that $Sub(M)< 13$, as otherwise non-trivial subgroups of $M$ and the conjugates of $M$ form an independent set of size at least $15$, a contradiction. Thus by \cite{das-mandal-number-of-subgroups}, $M$ is either supersolvable or $M\cong A_4$. In the second case, we get $|G|=48$ and an exhaustive search on non-supersolvable groups of order $48$ shows that for no such group $\alpha(\Gamma(G))\leq 14$ holds. Thus $M$ is supersolvable. 
      
      We first show that $M$ can have at most $3$ maximal subgroups. If possible, let $M$ has at least $4$ maximal subgroups. Let $M=M_1,M_2,M_3,M_4$ be the conjugates of $M$ in $G$. Then each $M_i$ has at least $4$ maximal subgroups. Moreover, any two such maximal subgroups can not belong to two such conjugates of $M$ simultaneously. Let $\mathcal{M}$ be the collection of all maximal subgroups of $M$ and its conjugates. Using simple counting techniques, one can check that $|\mathcal{M}|\geq 10$. Now, the subgroups in $\mathcal{M}$ and conjugates of $M$ form an independent set of size $\geq 14$. This implies $|\mathcal{M}|=10$. Again, if any subgroup in $\mathcal{M}$ has a proper non-trivial subgroup, then that would increase the size of the independent set. So, any subgroup in $\mathcal{M}$ is of prime order. Again, as $M$ is supersolvable, the maximal subgroups of $M$ are of prime index. This means that $|M|=p^2$ or $pq$. Thus $M\cong \mathbb{Z}_{p^2},\mathbb{Z}_{pq},\mathbb{Z}_p\times \mathbb{Z}_p$ or $\mathbb{Z}_p\rtimes \mathbb{Z}_q$. In the first two cases, $M$ has $1$ and $2$ maximal subgroups respectively and hence ruled out. Thus $M\cong \mathbb{Z}_p\times \mathbb{Z}_p$ or $\mathbb{Z}_p\rtimes \mathbb{Z}_q$. Again as $M$ has exactly $4$ maximal subgroups, $M\cong \mathbb{Z}_p\times \mathbb{Z}_p$ implies $p=3$, i.e., $|G|=36$. Now, an exhaustive search on non-supersolvable groups of order $36$ with a maximal subgroup isomorphic to $\mathbb{Z}_3\times \mathbb{Z}_3$ reveals that no such group $G$ exists for which $\alpha(\Gamma(G))\leq 14$. Thus $M\cong \mathbb{Z}_p\rtimes \mathbb{Z}_q$. Since this group has $p+2$ maximal subgroups and $q|p-1$, we must have $p=2$ and $q|1$, a contradiction. Thus $M$ can have at most $3$ maximal subgroups.

      If $M$ has exactly three maximal subgroups then by \cite{khazal} $M$ is either a $2$-group or $M\cong \mathbb{Z}_{p^aq^br^c}$. In the first case, $|G|=|M|[G:M]$ is a power of $2$, i.e, $G$ is a $2$-group, i.e., $G$ is nilpotent, a contradiction. In the second case, as $Sub(M)<13$, we have $M\cong \mathbb{Z}_{pqr}$ or $\mathbb{Z}_{p^2qr}$. In both cases, $M$ has exactly three maximal subgroups, say $X_1,X_2,X_3$, each of them being normal in $M$. Thus $N_G(X_i)=M$ or $G$ for $i=1,2,3$. If at least two of them are normal in $G$, say $X_1,X_2 \lhd G$, then $X_1,X_2$ are contained all the four conjugates of $M$, i.e., all conjugates of $M$ are equal to $X_1X_2$. Thus at most one of $X_1,X_2,X_3$ is normal in $G$. Suppose $X_1,X_2$ are not normal in $G$. Then $N_G(X_1)=N_G(X_2)=M$. So number of conjugates of $X_1$ in $G$ is $[G:N_G(X_1)]=[G:M]=4$. Similarly, $X_2$ also has $4$ conjugates in $G$. These $8$ conjugates of $X_1,X_2$, $4$ conjugates of $M$ and other remaining subgroups of $M$ forms an independent set of size $>14$, a contradiction. Thus $M$ can not have three maximal subgroups.
      
      So $M$ has one or two maximal subgroups and hence  either $M$ is a cyclic $p$-group or a cyclic group with exactly two prime factors. Thus Claim 3 holds.

     {\it Claim 4:} If $M$ is not a $p$-group, then $2\mid |M|$.\\
     {\it Proof of Claim 4:} Note that in this case, $[G:M]=4$ and $M\cong \mathbb{Z}_{p^nq^m}$. Suppose $p,q\neq 2$. Then $|G|=2^2p^nq^m$. Let $M=M_1,M_2,M_3,M_4$ be the conjugates of $M$ in $G$ of order $p^nq^m$. If $G$ has a unique subgroup $H$ of order $2$, then $H\lhd G$ and $MH$ is a proper subgroup of $G$ containing $M$, which contradicts the maximality of $M$ in $G$. Thus $G$ has at least three subgroups $H_1,H_2,H_3$ of order $2$. If both the Sylow $p$ and $q$ subgroups of $G$ are normal, then $M$ is normal in $G$, a contradiction. Again, if both of them are non-normal, we get at least $4+6=10$ Sylow $p$ and $q$-subgroups. This along with $4$ conjugates of $M$, $3$ subgroups of order $2$ creates an independent set of size $\geq 17$, a contradiction. So one of the Sylow $p$ and $q$-subgroups are normal and other is not. Suppose Sylow $p$-subgroups are non-normal and Sylow $q$-subgroup is normal in $G$. Let $Q$ be the unique Sylow $q$-subgroup. Then $QH_i$ for $i=1,2,3$ are subgroups of order $2q^m$ in $G$. Also $QH_i\neq QH_j$, as $QH_i=QH_j$ implies $H_iH_j \subseteq H_i(QH_j)=H_i(QH_i)=QH_i$. On the other hand, the Sylow $2$-subgroup is isomorphic to Klein's $4$-group (because if the Sylow $2$-subgroup is cyclic, and as Sylow $p$ and $q$ subgroups are also cyclic, $G$ becomes supersolvable). Thus $H_iH_j$ is equal to the Sylow $2$-subgroup of $G$ and hence we have $H_iH_j$ is a subgroup of $QH_i$, i.e., $4\mid 2q^m$, a contradiction. Thus $QH_1,QH_2,QH_3$ are three distinct subgroups of order $2q^m$ in $G$. Thus we get an independent set of size at least 15 given by $$4 ~(M_i\mbox{'s})+3 ~(H_i\mbox{'s}) + 3~(QH_i\mbox{'s})+4~(\mbox{atleast }4 \mbox{ Sylow }p\mbox{-subgroups})+1~(Q),$$ a contradiction. Thus $2\mid |M|$ and Claim 4 holds.

     Thus, using Claim 3 and Claim 4, in any case, it follows that $|G|=2^m\cdot p^n$, for an odd prime $p\geq 3$ and $m\geq 2$.

     {\it Claim 5:} $|G|=2^m\cdot 3^n$.\\
     {\it Proof of Claim 5:} Note that the Sylow $p$-subgroup is not normal in $G$, as otherwise we get a subgroup of order $2^{m-1}p^n$ containing $M$, contradicting its maximality in $G$. If $p\geq 5$, then number of Sylow $p$-subgroups is at least $6$. Now, as $M$ is cyclic, there exists at least one Sylow $p$-subgroup, say $H$, which is not contained in any conjugate of $M$. Thus $$|MH|=\dfrac{|M|\cdot |H|}{|M\cap H|}=\dfrac{(2^{m-2}p^n)\cdot p^n}{(\leq p^{n-1})}\geq 2^{m-2}p^{n+1}>2^mp^n=|G|, \mbox{ as }p\geq 5.$$ Thus $p=3$, i.e., Claim 5 holds.

Also from Claim 3, we know that $M$ is a cyclic group with at most two prime factors. If $M$ is a $p$-group, then $M\cong \mathbb{Z}_{3^n}$.

     {\it Claim 6:} If $M\cong \mathbb{Z}_{3^n}$, then $G\cong A_4$ or $(\mathbb{Z}_2\times \mathbb{Z}_2)\rtimes \mathbb{Z}_9$.\\
     {\it Proof of Claim 6:} In this case, we have $|G|=2^2\cdot 3^n$. We first prove that $n\leq 2$. As the Sylow $2$-subgroup of $G$ is isomorphic to $\mathbb{Z}_2\times \mathbb{Z}_2$, we have $3$ subgroups $H_1,H_2,H_3$ of $G$ of order $2$. If $n\geq 3$, then $G$ has subgroups of order $3$ and order $9$ in $G$ which are not Sylow $3$-subgroups of $G$. If both of them are not unique in $G$, then we have at least $4$ subgroups each of order $3$ and order $9$ in $G$. These $8$ subgroups along with $3$ subgroups of order $2$ and $4$ conjugates of $M$ forms an independent set of size $15$, a contradiction. If $G$ has a unique subgroup of order $3$, say $K$, then $H_iK$ are distinct (follows as in proof of Claim 4) subgroups of order $6$ in $G$. Similarly, if $G$ has a unique subgroup of order $3^2$, then we get $3$ distinct subgroups of order $2\cdot 3^2=18$ in $G$. Thus we have an independent set of size either $$3~(\mbox{order }2)+4~(\mbox{conjugates of }M)+1~(\mbox{order }3)+3~(\mbox{order }6)+4~(\mbox{order }3^2)=15, \mbox{ or}$$
     $$3~(\mbox{order }2)+4~(\mbox{conjugates of }M)+1~(\mbox{order }3)+3~(\mbox{order }6)+1~(\mbox{order }3^2)+3~(\mbox{order }18)=15.$$

     As both of them contradict the given bound, we have $n\leq 2$. Thus $|G|=12$ or $36$. Now, running an exhaustive search on non-supersolvable groups of these order in SAGE, one can show that only options for $G$ are $A_4$ or $(\mathbb{Z}_2\times \mathbb{Z}_2)\rtimes \mathbb{Z}_9$.

     {\it Claim 7:} If $M$ is not a $p$-group, then $G\cong SL(2,3)$.\\
     {\it Proof of Claim 7:} As $[G:M]=4$ and $M$ is not a $3$-group, from Claim 5 we have $|M|=2^{m-2}\cdot 3^n$, where $m\geq 3$. Note that Sylow $3$-subgroup is not normal in $G$, as otherwise it becomes a cyclic normal subgroup of a non-supersolvable group, thereby making its quotient (which is a $2$-group) non-supersolvable. Thus $G$ has $4$ Sylow $3$-subgroups. However, as $M$ is cyclic, it contains exactly one subgroup of order $3^n$ and other three Sylow $3$-subgroups of $G$ must lie outside $M$. This enforces $Sub(M)<10$, because $8$ proper non-trivial subgroups of $M$, $3$ Sylow $3$-subgroups of $G$ not in $M$ and $4$ conjugates of $M$ form an independnet set of size $15$. Now, as $M$ is cyclic and $|M|=2^{m-2}\cdot 3^n$, $Sub(M)<10$ implies $(m-1)(n+1)\leq 9$. Thus we have a limited choice for $(m,n)$, namely $(3,1),(3,2),(3,3),(4,1),(4,2)$ and $(5,1)$. The corresponding orders of $G$ are $24,72,216,48,144$ and $96$ respectively. An exhaustive search on non-supersolvable groups of these orders which admit a maximal subgroup $M$ (which is not a $p$-group) of index $4$ using SAGE \cite{sage} yields $G\cong SL(2,3)$.

\end{proof}

We state an immediate corollary, which follows immediately from the above theorem.
\begin{corollary}\label{6-supersolvable}
    Let $G$ be a finite group such that $\alpha(\Gamma(G))\leq 6$. Then $G$ is supersolvable.
\end{corollary}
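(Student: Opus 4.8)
The plan is to reduce this immediately to Theorem~\ref{ind-no-supersolvable-theorem}. Since $\alpha(\Gamma(G))\le 6$ trivially implies $\alpha(\Gamma(G))\le 14$, that theorem applies and tells us that $G$ is supersolvable unless $G$ is isomorphic to $A_4$, $SL(2,3)$, or $(\mathbb{Z}_2\times\mathbb{Z}_2)\rtimes\mathbb{Z}_9$. Hence the entire content of the corollary is the verification that each of these three exceptional groups has $\alpha(\Gamma(G))\ge 7$; once this is established, none of them can satisfy the hypothesis $\alpha(\Gamma(G))\le 6$, and supersolvability of $G$ follows at once.

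To carry this out I would, for each of the three groups, write down its subgroup lattice and exhibit an independent set of size $7$ in $\Gamma(G)$. For $A_4$, the three subgroups of order $2$ together with the four subgroups of order $3$ form such a set: the product of an order-$2$ subgroup with an order-$3$ subgroup has size $6$, and the product of two distinct order-$3$ subgroups has size $9$, so none of these products equals $A_4$. For $SL(2,3)$, the four Sylow $3$-subgroups together with the three cyclic subgroups of order $4$ contained in the unique (normal) quaternion subgroup $Q_8$ work, since a pairwise product either lies inside $Q_8$, or has size $9$, or has size $12$, never $24$. For $(\mathbb{Z}_2\times\mathbb{Z}_2)\rtimes\mathbb{Z}_9$, whose action of $\mathbb{Z}_9$ on $\mathbb{Z}_2\times\mathbb{Z}_2$ necessarily factors through $\mathbb{Z}_3$, the kernel of the action is a central subgroup of order $3$ and is the only subgroup of order $3$; using this one checks that the three subgroups of order $2$, the subgroup of order $3$, and the three cyclic subgroups of order $6$ are all isolated vertices of $\Gamma(G)$, again yielding an independent set of size $7$.

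The only nontrivial part is the bookkeeping with these three subgroup lattices and the attendant non-adjacency checks, which is entirely routine and is in any case confirmed by a direct computation in SAGE~\cite{sage} (which in fact gives the sharper values $\alpha(\Gamma(A_4))=7$, $\alpha(\Gamma(SL(2,3)))=12$, and $\alpha(\Gamma((\mathbb{Z}_2\times\mathbb{Z}_2)\rtimes\mathbb{Z}_9))=11$). Accordingly I do not anticipate any genuine obstacle; one could even dispense with the hand analysis entirely and simply cite the computed independence numbers of these three small groups.
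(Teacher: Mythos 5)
Your proposal is correct and follows essentially the same route as the paper: the corollary is deduced from Theorem~\ref{ind-no-supersolvable-theorem} by noting that the three exceptional groups have independence numbers exceeding $6$ (the paper simply cites the computed values $7$, $12$, and $11$). Your explicit independent sets of size $7$ in $A_4$, $SL(2,3)$, and $(\mathbb{Z}_2\times\mathbb{Z}_2)\rtimes\mathbb{Z}_9$ are a valid hand-verification of the same fact, so there is no substantive difference.
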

\begin{proof}
    Since the independence number of the comaximal subgroup graphs corresponding to $A_4,SL(2,3),(\mathbb{Z}_2\times \mathbb{Z}_2)\rtimes \mathbb{Z}_9$ are $7, 12$ and $11$ respectively.
\end{proof}
While we later strengthen this corollary in Theorem \ref{ind-no-nilpotent-theorem} in the next section, this serves as the foundational step for proving Theorem \ref{ind-no-nilpotent-theorem}.

{\remark The upper bound of $14$ is strict in the sense that we have only $3$ (finitely many) exceptions for this bound which are not supersolvable. However we have an infinite family of finite groups, namely $A_4 \times \mathbb{Z}_p$ ($p\geq 5$ is a prime), for which the independence number is $15$. }

{\remark Note that the upper bound provided by Theorem \ref{ind-no-supersolvable-theorem} does not increase when extended to CLT groups (groups satisfying the converse of Lagrange’s theorem). This is because the three exceptions in Theorem \ref{ind-no-supersolvable-theorem} and the infinite family discussed in the preceding remark are all non-CLT groups. }

\section{Nilpotency Criteria for $G$ via Independence Number}\label{section-nilpotent}
In this section, we establish a similar threshold for nilpotency: groups with comaximal subgroup graphs of independence number strictly below $7$ must be nilpotent – except for five special cases. 

\begin{theorem}\label{ind-no-nilpotent-theorem}
Let $G$ be a finite group such that $\alpha(\Gamma(G))\leq 6$. Then either $G$ is nilpotent or $G$ is isomorphic to one of the following groups: $S_3,D_5,\mathbb{Z}_3\rtimes \mathbb{Z}_4,\mathbb{Z}_3\rtimes \mathbb{Z}_8,\mathbb{Z}_5\rtimes \mathbb{Z}_4$.
\end{theorem}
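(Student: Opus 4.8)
The plan is to exploit Corollary \ref{6-supersolvable}: since $\alpha(\Gamma(G))\le 6$, the group $G$ is automatically supersolvable, hence solvable, with every maximal subgroup of prime index and a normal Sylow subgroup for the largest prime. Assuming $G$ is \emph{not} nilpotent, I would work with three elementary non-adjacency principles, each of which converts an abundance of subgroups into a large independent set in $\Gamma(G)$, contradicting $\alpha(\Gamma(G))\le 6$: (i) if $G$ is not a $p$-group, then \emph{every} collection of non-trivial proper $p$-subgroups of $G$ is independent, since the set-product of two $p$-subgroups has $p$-power order, hence order strictly less than $|G|$; (ii) a subgroup is non-adjacent to each of its conjugates, by Proposition \ref{prod-subgp-and-conjugate}; (iii) if $|H|\,|K|<|G|$ then $H$ and $K$ are non-adjacent. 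Since $G$ is non-nilpotent it has at least two prime divisors, so (i) applies to every prime dividing $|G|$.

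First I would pin down the primes. As $G$ is not nilpotent, some Sylow $p$-subgroup is non-normal, so $n_p>1$, and since the Sylow $p$-subgroups form an independent set by (i), $n_p\le 6$; combined with $n_p\equiv 1\pmod p$ this forces $n_p\ge p+1$, hence $p\in\{2,3,5\}$ with $n_p\in\{3,5\}$, $\{4\}$, $\{6\}$ respectively. Next I would show $|G|$ has exactly two prime divisors: if $p<q<r$ all divide $|G|$, then a non-normal Sylow subgroup has at least $3$ conjugates, and using Hall's theorem (available since $G$ is solvable) these conjugates together with appropriately chosen Sylow and Hall subgroups of the remaining primes assemble into an independent set of size at least $7$ — the only real work is verifying pairwise non-adjacency through (i)--(iii). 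So $|G|=p^a q^b$.

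The heart of the argument is bounding $a$ and $b$. Writing the non-normal Sylow as a $t$-Sylow, the case $t=5$ is impossible for two primes (as $n_5=6$ cannot divide the prime power contributed by the other prime), so $t\in\{2,3\}$; then the divisibility constraint $n_t\mid(\text{other prime})^{\text{exp}}$ together with $n_t\le 6$ forces the unordered pair of primes to be $\{2,3\}$ or $\{2,5\}$ and fixes $n_t$. Now apply (i) to the prime $t$: the $n_t\ge 3$ Sylow $t$-subgroups, plus at least one subgroup of each intermediate $t$-power order, already yield roughly $n_t + (a\ \text{or}\ b) - 1 \le 6$, and Proposition \ref{min-no-subgp-p-groups} rules out non-cyclic Sylow $t$-subgroups except in tiny cases; for the \emph{other} prime $s$, either its Sylow is normal (so all $s$-subgroups lie inside it, and Proposition \ref{min-no-subgp-p-groups} plus the independence of that subgroup lattice bounds the $s$-exponent) or it is non-normal (so conjugates of the $s$-Sylow combined with proper subgroups of the $t$-Sylow bound the $t$-exponent via (i) and (iii)). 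Carrying this through the handful of sub-cases — cyclic vs. non-cyclic Sylow, normal vs. non-normal — gives explicit small bounds on $a$ and $b$, so $|G|$ ranges over a finite list of small orders. A finite check in GAP/SAGE over the non-nilpotent groups of those orders then isolates exactly $S_3,\ D_5,\ \mathbb{Z}_3\rtimes\mathbb{Z}_4,\ \mathbb{Z}_3\rtimes\mathbb{Z}_8,\ \mathbb{Z}_5\rtimes\mathbb{Z}_4$ and confirms every other such group has $\alpha(\Gamma(G))\ge 7$.

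I expect the main obstacle to be this exponent-bounding case analysis. Adjacency in $\Gamma(G)$ is governed by whether a product of two subgroups is all of $G$, which is sensitive to normality, so in each sub-case one must choose a family of subgroups whose pairwise products are provably proper — using small order, conjugacy, or the $p$-group product trick of (i) — in order to push the independent set past size $6$. A secondary nuisance is accounting honestly for how many \emph{distinct} subgroups several Sylow $p$-subgroups contribute, since their subgroup lattices may overlap (indeed, forcing such overlaps via the inequality $|H\cap K|\ge |H|\,|K|/|G|$ is itself one of the useful sub-steps); this must be handled conservatively but does not affect the final finite list of orders.
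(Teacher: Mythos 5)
Your route is genuinely different from the paper's. The paper first shows that \emph{every maximal subgroup of $G$ is nilpotent} (a maximal subgroup has at most $7$ subgroups, and the only non-nilpotent possibility $S_3$ is eliminated by hand), and then invokes Robinson's Theorem 9.1.9 on minimal non-nilpotent (Schmidt) groups. That single step hands over, for free, exactly the structure you are trying to recover combinatorially: $|G|=p^mq^n$ (two primes only), a normal Sylow $p$-subgroup and a \emph{cyclic} non-normal Sylow $q$-subgroup; after that the paper only has to bound $m,n,q$ by small independent sets and run a finite search, just as you propose at the end. Your plan replaces this structural input by direct Sylow counting and the three non-adjacency principles, which is a legitimate and more elementary strategy in principle, and your two-prime exponent-bounding and final computational check are essentially parallel to the paper's.

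The genuine gap is your reduction to two prime divisors. The recipe you give --- the $\geq 3$ conjugates of a non-normal Sylow subgroup ``together with appropriately chosen Sylow and Hall subgroups of the remaining primes'' --- does not form an independent set, and the difficulty is not merely ``verifying pairwise non-adjacency'': a Hall subgroup that complements a Sylow subgroup is \emph{adjacent} to it, and two Hall subgroups whose prime supports jointly cover all primes of $|G|$ are typically adjacent to each other. The critical boundary example is $G=S_3\times\mathbb{Z}_5$ (non-nilpotent, three primes, $\alpha(\Gamma(G))=7$): the three Sylow $2$-subgroups together with the Sylow $3$- and Sylow $5$-subgroups give only $5$ vertices, and \emph{every} Hall subgroup of order $6$, $10$ or $15$ is adjacent to one of these five, so your proposed set cannot be completed to size $7$ without discarding some of it (a correct choice is the three subgroups of order $2$, the three of order $10$, and the Sylow $5$-subgroup). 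So the claim ``at least three primes forces $\alpha\geq 7$'' is true but needs its own careful case analysis (choosing, for each configuration, a prime $u$ and counting subgroups of order coprime to $u$, or mixing families more cleverly), which your sketch does not supply; alternatively, you can avoid the issue entirely by following the paper and deducing the two-prime Schmidt structure from nilpotency of maximal subgroups. A secondary, smaller omission: without the Schmidt-group theorem you do not get cyclicity of the non-normal Sylow subgroup for free, so your exponent-bounding must carry the non-cyclic cases via Proposition \ref{min-no-subgp-p-groups}, as you indicate; that part looks workable but adds sub-cases that must actually be written out before the final GAP/SAGE search over a finite list of orders.
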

\begin{proof}
    By Corollary \ref{6-supersolvable}, $G$ is supersolvable. \\    
    {\it Claim:} Every maximal subgroup of $G$ is nilpotent. \\
    {\it Proof of Claim:} For that, let $M$ be a maximal subgroup of $G$. Then $M$ has at most $5$ non-trivial proper subgroups (otherwise $M$ along with its non-trivial subgroups will form an independent set of size $\geq 7$). Thus $Sub(M)\leq 7$. If $Sub(M)\neq 6$, by \cite{das-mandal-number-of-subgroups}, $M$ is nilpotent. Suppose $M$ is non-nilpotent and $Sub(M)=6$. Then $M\cong S_3$.

    As $G$ is supersolvable and $M$ is a maximal subgroup of $G$, we have $[G:M]=[G:S_3]=p$ for some prime $p$.

    {\it Case 1:} $p\neq 2,3$. Then $|G|=6p$. Let $P$ be a Sylow $p$-subgroup of $G$. If $n_p>1$, then the Sylow $p$-subgroups and the proper subgroups of $S_3$ forms an independent set in $\Gamma(G)$, i.e., $n_p+4\geq 1+p+4=p+5\geq 10$, a contradiction. Thus $n_p=1$, i.e., $P$ is normal in $G$, i.e., $G\cong \mathbb{Z}_p \rtimes S_3$.    
    If $n_3>1$, then $\mathbb{Z}_p$, at least $4$ Sylow $3$-subgroups and three subgroups of order $2$ in $S_3$ forms an independent set in $\Gamma(G)$ of size $1+4+3=8$, a contradiction. Thus $n_3=1$ and hence $G\cong \mathbb{Z}_{3p}\rtimes \mathbb{Z}_2\cong D_{3p}$. Now $D_{3p}$ has $3p$ subgroups of order $2$, one subgroup of order $3$ and one subgroup of order $p$, i.e., $\alpha(\Gamma(D_{3p}))\geq 3p+2\geq 17$, a contradiction.

    {\it Case 2:} $p=2,3$. Then $|G|=12$ or $18$. However, an exhaustive search on supersolvable groups $G$ of order $12$ and $18$ with $\alpha(\Gamma(G))\leq 6$ reveal that none of them has a maximal subgroup isomorphic to $S_3$.
    
    Hence, the claim holds.

    As every maximal subgroup of $G$ is nilpotent, by Theorem 9.1.9 \cite{robinson-book}, either $G$ is nilpotent or $G$ is minimal non-nilpotent with $|G|=p^mq^n$ and $G$ has a unique Sylow $p$-subgroup $P$ and a cyclic (but not normal) Sylow $q$-subgroup. If $G$ is nilpotent, the theorem holds. In the latter case, we have $n_q\geq 1+q$. Now, if $P$ is not cyclic, then $Sub(P)\geq p+3$. So, the non-trivial proper subgroups of $P$ and Sylow $q$-subgroups of $G$ forms an independent set in $\Gamma(G)$, i.e., $\alpha(\Gamma(G))\geq (p+1)+(q+1)\geq 7$, a contradiction. Thus $P$ must be cyclic. As both Sylow subgroups are cyclic, $G$ is supersolvable and hence a CLT group. Moreover $p>q$.

    Let $Q$ be a Sylow $q$-subgroup with $|Q|=q^n$. If $n\geq 5$, then non-trivial proper subgroups of $Q$ along with conjugates of $Q$ forms an independent set in $\Gamma(G)$, i.e., $\alpha(\Gamma(G))\geq 4+(q+1)\geq 7$, a contradiction. Thus $n\leq 4$. 

    Again, non-trivial proper subgroups of $P$, non-trivial proper subgroups of $Q$ along with conjugates of $Q$ forms an independent set in $\Gamma(G)$, i.e., $6\geq \alpha(\Gamma(G))\geq (m-1)+(n-1)+(q+1)=m+n+q-1$, i.e., $m+n+q\leq 7$ and in particular $q\leq 5$.

    If $q=5$, then $m=n=1$, i.e., $|G|=5p$ and $G \cong \mathbb{Z}_p\rtimes \mathbb{Z}_5$. Also we have $n_5=p$. As Sylow $5$-subgroups forms an independent set, we must have $p\leq 6$. On the other hand, we know that $p>q$. This is a contradiction.

    If $q=3$, then $m+n\leq 4$. Also $1\neq n_q | p^m$, i.e., $n_q\geq p$. As Sylow $q$-subgroups forms an independent set, we must have $p\leq 6$. Now, as $p>q$, we have $p=5$. Thus the only possible option for $G$ is $|G|=5^m\cdot 3^n$ with $m+n\leq 4$. However, an exhaustive search on non-nilpotent groups $G$ of these orders revealed that none of them admits $\alpha(\Gamma(G))\leq 6$.

    If $q=2$, then $m+n\leq 5$. Arguing similarly as above, we get $p=3$ or $5$. Thus the only possible option for $G$ is $|G|=3^m\cdot 2^n$ or $5^m\cdot 2^n$ with $m+n\leq 5$. An exhaustive search on non-nilpotent groups $G$ of these orders revealed that only groups with $\alpha(\Gamma(G))\leq 6$ are $S_3,D_5,\mathbb{Z}_3\rtimes \mathbb{Z}_4,\mathbb{Z}_3\rtimes \mathbb{Z}_8$ and $\mathbb{Z}_5\rtimes \mathbb{Z}_4$. This proves the theorem.
\end{proof}

{\remark The upper bound of $6$ is strict in the sense that we have only $5$ (finitely many) exceptions for this bound which are not nilpotent. However we have an infinite family of finite groups, namely $S_3\times \mathbb{Z}_p$ ($p\geq 3$ is a prime), for which the independence number is $7$. }

\section{Concluding Remarks and Future Directions}
In this work, we established a bridge between combinatorial and algebraic structures by deriving important group-theoretic properties like solvability, supersolvability, and nilpotency from the independence number of the comaximal subgroup graph. Our results demonstrate that graph-theoretic invariants can serve as powerful tools for classifying finite groups, with sharp thresholds and well-characterized exceptions.

While the independence number provides a useful lens for studying groups, other structural properties of the comaximal subgroup graph may also yield further insights:

\begin{enumerate}
    \item Domination Number: Can the domination number detect other group properties? For example, is it true that if the domination number is less than $6$, then the group is supersolvable except for $A_4$ and $SL(2,3)$?
    \item Spectral Aspects: Do the eigenvalues or Laplacian spectrum of the graph reveal hidden group-theoretic features?
\end{enumerate}

\section*{Acknowledgement}
The authors acknowledge the funding of DST-FIST Sanction no. $SR/FST/MS-I/2019/41$ and the first author acknowledges DST-SERB-MATRICS Sanction no. $MTR/$ $2022/000020$, Govt. of India. 

\subsection*{Data Availability Statements}
Data sharing not applicable to this article as no datasets were generated or analysed during the current study.

\subsection*{Competing Interests} The authors have no competing interests to declare that are relevant to the content of this article.

\end{document}